\numberwithin{equation}{section}
\theoremstyle{plain}
\newtheorem{theorem}[subsection]{Theorem}
\newtheorem{proposition}[subsection]{Proposition}
\newtheorem{lemma}[subsection]{Lemma}
\newcommand{\Q}{\mathbb{Q}}
\newcommand{\Z}{\mathbb Z}
\newcommand{\R}{\mathbb{R}}
\title{Lattice points in the 3-dimensional torus}
\author{Fernando Chamizo and Dulcinea Raboso}
\address{Departamento de Matem\'{a}ticas and ICMAT\\
Facultad de Ciencias
\\
Universidad Aut\'{o}no\-ma de Madrid
\\
28049 Madrid. Spain}
\email{fernando.chamizo@uam.es}
\email{dulcinea.raboso@uam.es}
\thanks{The first author is partially supported by the grant
MTM2011-22851 from the Ministerio de Ciencia e Innovaci\'{o}n (Spain).}
\begin{document}
\begin{abstract}
We prove the exponent $4/3$ for the lattice point discrepancy of a torus in $\R^3$ (generated by the rotation of a circle around the $z$ axis). The exponent comes from a diagonal term and it seems a natural limit for any approach based solely on classical methods of exponential sums.
The result extends to other solids in $\R^3$ related to the torus.
\end{abstract}
\maketitle

\pagestyle{myheadings} 
\renewcommand{\proofname}{Proof}
\renewcommand{\refname}{References}

\section{Introduction}
Consider a fixed compact solid body $\mathbb{B}\subset\R^3$. The study of
\[
\#\big\{\vec{n}\in\Z^3\;:\; R^{-1}\vec{n}\in\mathbb{B}\big\},
\qquad
R>1,
\]
is a basic problem in lattice point theory. Under very general regularity conditions, this quantity is well approximated by $|\mathbb{B}|R^3$ where $|\mathbb{B}|$ denotes the volume of $\mathbb{B}$. Assuming regularity and convexity, meaning positive Gaussian curvature, the error in this approximation, the so-called lattice point discrepancy, is asymptotically smaller than the scaled area of the boundary, namely it is~$O\big(R^{\gamma}\big)$ for some $\gamma<2$. Many authors have devoted their efforts to give a partial answer for the natural question of determining the infimum of the valid values of $\gamma$. Commonly these results depend on subtle exponential sum techniques. The question remains open even for simple bodies like the sphere (see the survey \cite{IvKrKuNo} and the new results in \cite{guo}).

The non-convex case has also attracted the interest of researchers especially in the last decade (see for instance \cite{peter} \cite{kratzel}  \cite{kratzel2} \cite{nowak}  \cite{nowakz} \cite{guoz}).
A notable difference is that sometimes the main term has to be complemented with a secondary main term coming, in some way, from the points of vanishing curvature. Note for instance that if $\mathbb{B}$ is the cube $[-1,1]^3$ with smoothed corners and edges (preserving the flatness of the faces) then $\#\big\{\vec{n}\in\Z^3\;:\; R^{-1}\vec{n}\in\mathbb{B}\big\}$ counts more than $R^2$ on the boundary when $R\in\Z^+$.

A particularly symmetric example, considered in \cite{nowak}, is the solid torus
\begin{equation}\label{torus}
\mathbb{T}=
\Big\{(x,y,z)\in\R^3\;:\;
\big(\rho'-\sqrt{x^2+y^2}\big)^2+z^2\le\rho^2\Big\}
\end{equation}
where $0<\rho<\rho'$ are fixed constants.
Its volume is $2\pi^2\rho^2\rho'$ but the natural main term to approximate the number of lattice points in the $R$-scaled torus is given by
\begin{equation}\label{main}
\mathcal{M}(R)=
2\pi^2\rho^2\rho'R^3
+4\pi\rho\rho'R^{2}
\sum_{n=1}^\infty
\frac{J_1(2\pi \rho Rn)}{n}
\end{equation}
where $J_1$ is the Bessel function.
The second main term has an oscillatory character and it is well approximated by $R^{3/2}$ times a $\rho^{-1}$-periodic function. In particular, when $R\in\rho^{-1}\Z^+$, up to a negligible error term, \eqref{main} can be substituted by $2\pi^2\rho^2\rho'R^3+C\rho^{1/2}\rho'R^{3/2}$ with $C$ a certain constant.

\

Our main result bounds the lattice point discrepancy when $\mathcal{M}(R)$ is taken as the main term.

\begin{theorem}\label{mainth}
With the notation of \eqref{torus} and \eqref{main}, consider
\[
\mathcal{N}(R)=
\#\big\{\vec{n}\in\Z^3\;:\; R^{-1}\vec{n}\in\mathbb{T}\big\},
\qquad\text{and}\qquad
\mathcal{E}(R)=
\mathcal{N}(R)-\mathcal{M}(R).
\]
Then we have
\[
\mathcal{E}(R)=
O\big(R^{4/3+\epsilon}\big)
\qquad\text{for every}\quad\epsilon>0.
\]
\end{theorem}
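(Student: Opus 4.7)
The plan is to exploit the rotational symmetry of $\mathbb{T}$ around the $z$-axis by slicing horizontally. For each integer $n$ with $|n|\le\rho R$, the intersection of $R\mathbb{T}$ with the plane $z=n$ is the annulus $\varrho_-(n)\le\sqrt{x^2+y^2}\le\varrho_+(n)$, where $\varrho_\pm(n)=R\rho'\pm\sqrt{\rho^2R^2-n^2}$. Writing the Gauss circle counting function as $N(T)=\#\{(a,b)\in\Z^2:a^2+b^2\le T^2\}=\pi T^2+P(T)$, we have
\[
\mathcal{N}(R)=\sum_{|n|\le\rho R}\bigl(N(\varrho_+(n))-N(\varrho_-(n))\bigr).
\]
The smooth part sums to $4\pi\rho' R\sum_{|n|\le\rho R}\sqrt{\rho^2R^2-n^2}$, and Poisson summation applied to the extension-by-zero of $n\mapsto\sqrt{\rho^2R^2-n^2}$ recovers $\mathcal{M}(R)$ exactly: the $\ell=0$ Fourier coefficient produces the volume $2\pi^2\rho^2\rho'R^3$, while the nonzero coefficients assemble the Bessel sum in \eqref{main} via $\int_{-1}^{1}\sqrt{1-t^2}\,e^{-2\pi ixt}\,dt = J_1(2\pi x)/(2x)$. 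The theorem therefore reduces to showing $S(R):=\sum_{|n|\le\rho R}\bigl(P(\varrho_+(n))-P(\varrho_-(n))\bigr)=O(R^{4/3+\epsilon})$.

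To estimate $S(R)$ I would substitute the truncated Hardy--Voronoi expansion
\[
P(T)=\frac{T^{1/2}}{\pi}\sum_{k\le K}\frac{r(k)}{k^{3/4}}\cos\bigl(2\pi T\sqrt{k}-\tfrac{3\pi}{4}\bigr)+O\bigl(T^{\epsilon}(1+T^{1/2}K^{-1/4})\bigr),
\]
valid for any $K\ge 1$, where $r(k)=\#\{(a,b)\in\Z^2:a^2+b^2=k\}$. The remainder contributes $O(R^{3/2+\epsilon}K^{-1/4})$ to $S(R)$ after summing the $O(R)$ slices. Interchanging summations in the principal part reduces the task to bounding, for each $k\le K$ and each sign choice,
\[
\Sigma_\pm(k)=\sum_{|n|\le\rho R}\varrho_\pm(n)^{1/2}\exp\bigl(2\pi i\sqrt{k}\,\varrho_\pm(n)\bigr).
\]
The phase has second derivative $\asymp\sqrt{k}/R$ at its unique interior stationary point $n=0$. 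Applying Poisson summation in $n$ and then stationary phase on the resulting Fourier integrals, each Poisson dual $\ell\in\Z$ gives a stationary point at $n_\ell=\rho R\ell/\sqrt{k+\ell^2}$ contributing $O\bigl(R\,k^{1/2}(k+\ell^2)^{-3/4}\bigr)$; summing over $\ell$ yields $\Sigma_\pm(k)\ll Rk^{1/4}$. Weighted by $r(k)k^{-3/4}$ from the Voronoi expansion and summed using $\sum_{k\le K}r(k)k^{-1/2}\ll K^{1/2+\epsilon}$, this produces the principal-part bound $O(RK^{1/2+\epsilon})$ for $S(R)$.

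Balancing the two error contributions $RK^{1/2}\asymp R^{3/2}K^{-1/4}$ fixes $K\asymp R^{2/3}$ and gives $S(R)=O(R^{4/3+\epsilon})$, the bound claimed by the theorem. The main obstacle---the ``diagonal term'' flagged in the abstract---is the step $\Sigma_\pm(k)\ll Rk^{1/4}$: it is essentially a triangle-inequality sum over the $\asymp\sqrt{k}$ Poisson duals with $|\ell|\lesssim\sqrt{k}$, each of comparable absolute value $R/k^{1/4}$, and squeezing any further cancellation among these duals appears beyond the reach of purely classical exponential-sum techniques.
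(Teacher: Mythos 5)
Your slicing reduction and the recovery of the main term are fine: the identity $\int_{-1}^1\sqrt{1-t^2}\,e(-xt)\,dt=J_1(2\pi x)/(2x)$ together with Poisson summation in the slice variable does reproduce $\mathcal{M}(R)$, and this is essentially Nowak's route (which the paper deliberately replaces by a direct three-dimensional Poisson/stationary-phase analysis). The first genuine gap is the truncated Hardy--Voronoi formula you invoke: the known pointwise error term is $O\big(T^{1+\epsilon}K^{-1/2}\big)$ (plus $O(T^{\epsilon})$), not $O\big(T^{\epsilon}(1+T^{1/2}K^{-1/4})\big)$; the latter is only the mean-square size of the tail of the Voronoi series and cannot be used pointwise at the $O(R)$ specific radii $\varrho_\pm(n)$ without a separate (and nontrivial) averaging argument over those radii. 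With the correct error term the truncation cost over the slices is $O\big(R^{2+\epsilon}K^{-1/2}\big)$, and balancing against your principal-part bound $O\big(RK^{1/2+\epsilon}\big)$ forces $K\asymp R$ and yields only $\mathcal{E}(R)=O\big(R^{3/2+\epsilon}\big)$ --- weaker than Nowak's $11/8$, and nowhere near $4/3$.

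The second, more structural gap is that your estimate of the principal part extracts no cancellation at all from the sum over $k$: the bound $\Sigma_\pm(k)\ll Rk^{1/4}$ is just the second-derivative (B-process plus absolute values) estimate, and you then sum $r(k)k^{-3/4}|\Sigma_\pm(k)|$ by the triangle inequality. The exponent $4/3$ is not attainable this way, no matter how $K$ is chosen. Both Nowak's $11/8$ and the paper's $4/3$ come from treating the resulting double exponential sum, in the paper's normalization $\sum_{m,n}r(m)\,e\big(R(\rho\sqrt{m+n^2}\pm\sqrt{m})\big)$, as a genuinely two-dimensional object: Cauchy's inequality in the $m$-variable (this is where the true diagonal term $MN^{1/2}$, responsible for the $4/3$ barrier announced in the abstract, appears), the van der Corput B-process, and the Chamizo--Crist\'obal estimates for the transformed phases $R\rho(\sqrt{x}-\sqrt{x+y})$; see Lemma~\ref{fromChCr} and Proposition~\ref{estimation}. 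So the ``diagonal barrier'' you identify --- the triangle-inequality sum over the $\asymp\sqrt{k}$ Poisson duals --- is not the diagonal the paper refers to, and without some such two-variable cancellation step your argument, as written, does not prove the theorem.
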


The previously best known result is due to W.G. Nowak \cite{nowak} who obtained $\mathcal{E}(R)=
O\big(R^{11/8+\epsilon}\big)$. He wrote $\mathcal{M}(R)$ as a series depending on elementary functions but it is equivalent to our statement after substituting the asymptotic formula for $J_1$.

The exponent $4/3$ is better than the best known result for general convex bodies of rotation. This should be stressed  because in principle the non-convex case is more difficult from the analytic point of view. The exponent comes from a diagonal term and then it seems unlikely to be improved in the context of the classical approaches based on exponential sums.
If one could count with precision (beyond the limits of the harmonic analysis) points close to the boundary of the scaled theory then one could parallel the arguments of \cite{ChIw} (improved in \cite{heath}) or \cite{ChCrUb} to go beyond $4/3$. The multiplicative harmonics (Dirichlet characters) and the automorphic harmonics employed in these papers apparently cannot be adapted to the torus.

\

The structure of the paper is as follows: In \S2 we apply the Poisson summation formula to obtain formulas for $\mathcal{N}(R)$ and $\mathcal{E}(R)$. In \S3 an application of the stationary phase principle allows to interpret the formula for $\mathcal{E}(R)$ as an exponential sum.
In \cite{nowak} this aim is reached indirectly thanks to the truncated Hardy-Vorono\"{\i} formula for the circle problem \cite{ivicb} (the circles appear because each horizontal section of $\mathbb{T}$ is a corona). The advantages of our approach are that it is applicable to other problems, it is self-contained and  it reveals the main term in a more transparent way.
We devote \S4 to estimate some exponential sums. Part of the work is done in \cite{ChCr} and \cite{ChIw}. In \S5 we combine the results of the previous sections to get Theorem~\ref{mainth}.
Finally, in \S6 we consider some extensions of our method.

\

In the following sections we assume $\rho'=1$. It can be done without loss of generality because  $\mathcal{N}(R)$ is clearly invariant by $(R,\rho,\rho')\mapsto (\lambda^{-1}R,\lambda\rho,\lambda\rho')$.
We use the standard notation $e(x)=e^{2\pi i x}$ and $\epsilon$ denotes in this paper an arbitrarily small quantity, not necessarily the same each time. The constants involved in~$O$ and~$\ll$ (notations that we consider equivalent) may depend on $\epsilon$ and on $\rho$.

\

\section{The application of the Poisson summation formula}

Let $\chi$ be the characteristic function of $\mathbb{T}$
\[
\chi(\vec{x})=1\quad\text{ if }\vec{x}\in\mathbb{T}
\qquad\text{and}\qquad
\chi(\vec{x})=0\quad\text{ if }\vec{x}\in\R^3-\mathbb{T}.
\]
A formal application of the Poisson summation formula would give
\[
\mathcal{N}(R)
=
\sum_{\vec{n}\in\Z^3}\chi\big(R^{-1}\vec{n}\big)
=
R^3\sum_{\vec{n}\in\Z^3}\widehat{\chi}(R\vec{n})
\]
and it can be checked that $\mathcal{M}(R)$ comes from the terms with $n_1=n_2= 0$ (note that
$\widehat{\chi}(\vec{0})=|\mathbb{T}|=2\pi^2\rho^2$ and see Lemma~\ref{fouriert} below).
In principle this would allow to express directly $\mathcal{E}(R)$ as an oscillatory series. Unfortunately, the hypotheses to apply the Poisson summation formula are not fulfilled and the convergence of the last series  is not assured.
A standard analytic tool to avoid this problem is to introduce some smoothing in $\chi$. We proceed as in Proposition~2.1 of \cite{revo}. For the sake of completeness we include here a proof.
We do not use any special property of $\mathbb{T}$ and in fact the result holds for any smooth compact solid body.

\begin{proposition}\label{poisson}
Given $R>2$ and $\delta= R^{-c}$ with $0<c<1$ a fixed constant, there exists $R-2\delta^{1-\epsilon}<R'<R+2\delta^{1-\epsilon}$ such that
\[
\mathcal N(R)= 2\pi^2 \rho^2R^3+R'^3\sum\eta\big(\delta\|\vec{n}\|\big)\widehat{\chi}(R'\vec{n})+O\big(R^{2+\epsilon}\delta\big)
\]
where the summation is over ${\vec{n}}\in \Z^3-\{\vec{0}\}$.
\end{proposition}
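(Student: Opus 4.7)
The idea is to smooth $\chi$ so that Poisson summation applies, with smoothing scale chosen exactly to match the Fourier cutoff $\eta(\delta\|\vec n\|)$ of the statement, and to select $R'$ by averaging so that the boundary error is $O(R^{2+\epsilon}\delta)$.

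Fix a nonnegative radial Schwartz bump $\omega$ on $\R^3$ with $\int\omega=1$ and $\supp\omega\subset B(\vec 0,1)$, and identify $\eta$ with $\widehat\omega$. Let $\omega_{\delta/R'}(\vec x)=(R'/\delta)^3\omega(R'\vec x/\delta)$ be the rescaled mollifier, and form the smooth minorant/majorant $\varphi^{\pm}=f^{\pm}*\omega_{\delta/R'}$, where $f^{\pm}$ is the characteristic function of the $(\delta/R')$-inflation/deflation of $\mathbb T$. These are smooth and compactly supported, satisfy $\varphi^{-}\le\chi\le\varphi^{+}$, and on the Fourier side obey $\widehat{\varphi^{\pm}}(R'\vec n)=\widehat{f^{\pm}}(R'\vec n)\,\eta(\delta\|\vec n\|)$, matching the weight of the statement. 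Applying Poisson summation at the parameter $R'$,
\[
\sum_{\vec n}\varphi^{\pm}({R'}^{-1}\vec n)={R'}^3\widehat{\varphi^{\pm}}(\vec 0)+{R'}^3\sum_{\vec n\ne\vec 0}\widehat{\varphi^{\pm}}(R'\vec n),
\]
and the zero-frequency term equals $2\pi^2\rho^2{R'}^3+O(R^2\delta)$. Using $|R^3-{R'}^3|\ll R^2\delta^{1-\epsilon}=O(R^{2+\epsilon}\delta)$, the sandwich reduces matters to bounding
\[
\mathcal B(R'):=\#\{\vec n\in\Z^3:\mathrm{dist}(\vec n,R'\partial\mathbb T)<\delta\},
\]
the lattice count in a $\delta$-tube around $R'\partial\mathbb T$.

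Controlling $\mathcal B(R')$ is the main obstacle: volumetrically it is $\sim R^2\delta$, but this is not available pointwise without a priori discrepancy information. The averaging trick exploits the flexibility of $R'$ over the interval $(R-2\delta^{1-\epsilon},R+2\delta^{1-\epsilon})$. Swapping sum and integral,
\[
\int\mathcal B(t)\,dt=\sum_{\vec n}\bigl|\{t:\mathrm{dist}(\vec n,t\partial\mathbb T)<\delta\}\bigr|,
\]
and for each fixed $\vec n$ the inner set has length $O(\delta)$ because the surface $t\partial\mathbb T$ sweeps past $\vec n$ with velocity $\sim 1$ (transversality and the boundedness of the second fundamental form of $\partial\mathbb T$). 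The contributing $\vec n$ lie in the enlarged tube of thickness $\sim\delta^{1-\epsilon}$ around $R\partial\mathbb T$, numbering at most $O(R^{2+\epsilon}\delta^{1-\epsilon})$ by the trivial volumetric bound, so $\int\mathcal B(t)\,dt\ll R^{2+\epsilon}\delta^{2-\epsilon}$ and the average over the length-$\delta^{1-\epsilon}$ interval is $O(R^{2+\epsilon}\delta)$. Pigeonhole then supplies some $R'$ with $\mathcal B(R')\ll R^{2+\epsilon}\delta$.

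At this $R'$, the Poisson sandwich determines $\mathcal N(R')$ up to $O(R^{2+\epsilon}\delta)$, and $|\mathcal N(R)-\mathcal N(R')|$ is of the same order by the same averaging applied to the symmetric difference $R\mathbb T\triangle R'\mathbb T$ (whose volume is $\ll R^2\delta^{1-\epsilon}$). Replacing ${R'}^3|\mathbb T|$ by $2\pi^2\rho^2 R^3$ and collecting the errors yields the displayed identity.
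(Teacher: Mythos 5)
There is a genuine gap, and it sits exactly at the point you call ``the trivial volumetric bound.'' The number of lattice points in a tube of thickness $h<1$ around the surface $R\partial\mathbb{T}$ is \emph{not} trivially $O(R^{2+\epsilon}h)$: volume only controls lattice counts at scales $\gtrsim 1$, and for sub-unit thickness the trivial bound is $O(R^2)$, full stop. Estimating the count in a $\delta^{1-\epsilon}$-shell by its volume is itself a lattice point discrepancy statement of essentially the same strength as the theorem being proved (indeed, such a shell count is $\mathcal N(R+Ch)-\mathcal N(R-Ch)$ up to geometry), so your argument is circular. With only the honest trivial bound $O(R^2)$ for the tube population, your averaging gives $\int\mathcal B(t)\,dt\ll R^2\delta$ and hence a pigeonholed $\mathcal B(R')\ll R^2\delta^{\epsilon}$, far weaker than the $O(R^{2+\epsilon}\delta)$ you need; and for small $\delta$ (the proposition allows any $\delta=R^{-c}$, $c<1$) the claim $\mathcal B(R')\ll R^{2+\epsilon}\delta$ is not even plausibly reachable by these means. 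The same objection applies to your final step: $|\mathcal N(R)-\mathcal N(R')|\ll R^{2+\epsilon}\delta$ is again a count-versus-volume assertion for a thin shell, and ``the same averaging'' cannot be reused because $R'$ has already been fixed. A secondary, repairable mismatch: your majorant/minorant are built from inflations/deflations $f^{\pm}$ of $\mathbb{T}$, so Poisson summation produces $\widehat{f^{\pm}}$, not the $\widehat{\chi}$ appearing in the statement.

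The paper's proof avoids all of this. It convolves the \emph{dilated} characteristic functions $\chi_{R_1}$, $\chi_{R_2}$ (with $R_{1,2}=R\mp 2\delta^{1-\epsilon}$) with a mollifier whose positivity gives the pointwise sandwich $(\widehat{\eta}_\delta*\chi_{R_1})\le \chi_R+O(\delta^k)\le(\widehat{\eta}_\delta*\chi_{R_2})+O(\delta^k)$; summing over $\Z^3$ and using continuity in the dilation parameter, the intermediate value theorem supplies an $R'$ at which the \emph{smoothed} lattice sum equals $\mathcal N(R)$ up to $O(R^3\delta^k)$. Poisson summation is then applied to the smooth function only, whose Fourier transform is exactly $\eta(\delta\|\vec\xi\|)R'^3\widehat{\chi}(R'\vec\xi)$, and the sole non-negligible error is $|R'^3-R^3|\,\widehat{\chi}(\vec 0)\ll R^2\delta^{1-\epsilon}\ll R^{2+\epsilon}\delta$. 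No count of lattice points near the boundary is ever needed at this stage; that hard analysis is deferred to the exponential sum sections. To fix your write-up you would have to abandon the tube-count pigeonholing and adopt some version of this monotone sandwich plus intermediate-value selection of $R'$ (which is precisely why the statement involves an unspecified $R'$ rather than $R$).
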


\begin{proof}
Let $\eta\in C_0^\infty(-1,1)$ be a function with $\eta(0)=1$ and such that the Fourier transform of $\eta(\|\vec{x}\|)$ is positive and let $\widehat{\eta}_{\delta}$ be the Fourier transform of $\eta(\delta\|\cdot\|)$. Then, for $k\geq 1$ we have
\[
\int_{\|\vec{t}\|\leq \delta^{1-\epsilon}} \widehat{\eta}_{\delta}(\vec{t})\;d\vec{t}=1+O(\delta^k)
\quad \text{and}\quad
\int_{\|\vec{t}\|\geq \delta^{1-\epsilon}} \widehat{\eta}_{\delta}(\vec{t})\;d\vec{t}=O(\delta^k).
\]
Besides that, we consider the convolution
\[
(\widehat{\eta}_{\delta}*\chi_{R})(\vec{x})
=
\int_{\R^3}\widehat{\eta}_{\delta}(\vec{t})\chi_{R}(\vec{x}-\vec{t})\; d\vec{t},
\]
where $\chi_R(\vec{x})=\chi(R^{-1}\vec{x})$.
If $R_1=R-2\delta^{1-\epsilon}$ and $R_2=R+2\delta^{1-\epsilon}$ we have
\[
(\widehat{\eta}_{\delta}*\chi_{R_1})(\vec{x})\leq \chi_R(\vec{x})+O(\delta^k)
\qquad \text{and} \qquad
(\widehat{\eta}_{\delta}*\chi_{R_2})(\vec{x})\geq \chi_R(\vec{x})+O(\delta^k).
\]
Therefore, for some $R'$ such that $|R'-R|<2\delta^{1-\epsilon}$,
\[
\sum_{\vec{n}\in\Z^3}(\widehat{\eta}_{\delta}*\chi_{R'})(\vec{n}) = \mathcal N(R) + O\big(R^{3}\delta^k\big).
\]
The Poisson summation formula applied to the first term gives
\[
\mathcal N(R)
=
R'^{3}\eta(0)\widehat{\chi}(0) +
R'^{3}\sum_{\vec{0}\neq \vec{n}\in\Z^3}\eta\big(\delta\|\vec{n}\|\big)\widehat{\chi}(R'\vec{n}) +O\big(R^{3}\delta^k\big).
\]
Noting that $R'^3=R^3+O(R^2\delta^{1-\epsilon})$ and taking $k$ large enough we conclude the proof.
\end{proof}

To interpret the sum as an exponential sum it is important to study the oscillation of $\widehat{\chi}$. As a preliminary step, we give a convenient integral formula for $\widehat{\chi}$.

\begin{lemma}\label{fouriert}
The Fourier transform of $\chi$ is a radial function in the first two variables and verifies
$\widehat{\chi}(\xi_1, \xi_2,\xi_3)=
\Psi(\xi_1^2+\xi_2^2,\xi_3)$ for $\xi_1^2+\xi_2^2\neq 0$, where
\[
\Psi(t,u)=
\frac{\rho}{\sqrt{t}}
\int_0^{2\pi}
(1+\rho\sin\theta)\sin\theta
J_1\big(2\pi (1+\rho\sin\theta)\sqrt{t}\big)
e(\rho u\cos\theta)
\; d\theta.
\]
\end{lemma}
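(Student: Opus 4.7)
The plan is to compute $\widehat{\chi}$ directly in cylindrical coordinates, exploiting that $\mathbb{T}$ is invariant under rotations about the $z$-axis, which forces $\widehat{\chi}$ to be radial in the first two variables. Thus it suffices to evaluate $\widehat{\chi}(\sqrt{t},0,u)$.

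First I write
\[
\widehat{\chi}(\sqrt{t},0,u) = \int\!\!\int\!\!\int \chi(x,y,z)\, e(-\sqrt{t}\,x - uz)\, dx\, dy\, dz
\]
(up to the paper's sign convention), and switch to cylindrical coordinates $x=r\cos\phi$, $y=r\sin\phi$, so that $\chi$ becomes the indicator of $(r-1)^2+z^2\le\rho^2$. The angular integral is handled by the standard Bessel identity
\[
\int_0^{2\pi} e(-r\sqrt{t}\cos\phi)\, d\phi = 2\pi J_0(2\pi r\sqrt{t}),
\]
reducing the computation to a double integral over the cross-sectional disk in the $(r,z)$-half-plane.

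Next I evaluate the inner $r$-integral. For fixed $z$ with $|z|\le\rho$, set $r_{\pm}(z)=1\pm\sqrt{\rho^2-z^2}$. Using the antiderivative $\frac{d}{dr}\!\bigl[r J_1(2\pi r\sqrt{t})\bigr]=2\pi\sqrt{t}\cdot r J_0(2\pi r\sqrt{t})$, I obtain
\[
\int_{r_-(z)}^{r_+(z)} r J_0(2\pi r\sqrt{t})\, dr = \frac{1}{2\pi\sqrt{t}}\Bigl[r_+(z)J_1\bigl(2\pi r_+(z)\sqrt{t}\bigr) - r_-(z)J_1\bigl(2\pi r_-(z)\sqrt{t}\bigr)\Bigr].
\]
Now I substitute $z=\rho\cos\theta$ with $\theta\in[0,\pi]$, so that $\sqrt{\rho^2-z^2}=\rho\sin\theta$, $r_\pm = 1\pm\rho\sin\theta$, and $dz=-\rho\sin\theta\, d\theta$. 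This produces two integrals over $[0,\pi]$ with integrands built from $(1\pm\rho\sin\theta)J_1\bigl(2\pi(1\pm\rho\sin\theta)\sqrt{t}\bigr)$.

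The last step is the symmetrisation. In the piece containing $r_-$, apply the change of variables $\theta\mapsto 2\pi-\theta$, which fixes $\cos\theta$ but flips $\sin\theta$. The integrand becomes exactly $-(1+\rho\sin\theta)\sin\theta\, J_1\bigl(2\pi(1+\rho\sin\theta)\sqrt{t}\bigr)\,e(\pm\rho u\cos\theta)$ but on the interval $[\pi,2\pi]$, so after absorbing the minus sign the two pieces glue into one integral over $[0,2\pi]$. Putting everything together produces the claimed formula for $\Psi$, with the factor $\rho/\sqrt{t}$ emerging from the $(2\pi\sqrt{t})^{-1}$ out of the Bessel antiderivative and the $\rho$ from the Jacobian. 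The only delicate point is the sign bookkeeping in this symmetrisation step, but once that is done the identity follows mechanically; there is no genuine analytic obstacle.
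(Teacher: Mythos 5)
Your proof is correct and follows essentially the same route as the paper: radial reduction, cylindrical coordinates with the $J_0$ integral representation, and the identity $\bigl(wJ_1(w)\bigr)'=wJ_0(w)$ to pass from the cross-sectional disc to its boundary; your explicit $r$-integration between $r_\pm(z)$ followed by the symmetrisation $\theta\mapsto 2\pi-\theta$ is just the hands-on version of the paper's Green's theorem step with the boundary parametrised by $\theta$. The only cosmetic discrepancy is that your substitution $z=\rho\cos\theta$ produces $e(-\rho u\cos\theta)$ rather than $e(\rho u\cos\theta)$, which is harmless because the integral is invariant under $\theta\mapsto\pi-\theta$ (equivalently, $\Psi$ is even in $u$ by the symmetry of $\mathbb{T}$ under $z\mapsto-z$).
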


\begin{proof}
It is well known that the Fourier transform of the radial function is radial, separating the first two variables, $\widehat{\chi}(\xi_1, \xi_2,\xi_3)=
\Psi(\xi_1^2+\xi_2^2,\xi_3)$ where $\Psi(t,u)=\widehat{\chi}(\sqrt{t},0,u)$. By changing to cylindrical coordinates and the integral representation
\[
J_0(x)
=
\frac{1}{2\pi}\int_0^{2\pi}
\cos(x\cos\theta)\; d\theta,
\]
we have
\[
 \Psi(t,u)=
 \iiint_{\mathbb{T}}e(-x\sqrt{t}-zu)\; dxdydz
 =\iint_D
 2\pi rJ_0(2\pi r\sqrt{t})e(-zu)\; drdz,
\]
where $D$ is the disc $D=\big\{(r,z)\in\R^2\;:\; (1-r)^2+z^2\le \rho^2\big\}$.

The function in the integral is the derivative with respect to the variable $r$ of
$f(r,z)=rt^{-1/2}J_1(2\pi r\sqrt{t})e(-zu)$, because $\big(zJ_1(z)\big)'=zJ_0(z)$. Hence, Green's theorem applies to the field $\vec{F}=\big(0,f\big)$ giving the expected result with the parametrization of the boundary $r=1+\rho\sin\theta$, $z=-\rho\cos\theta$
\end{proof}

After the application of the Poisson summation formula, the volume term $2\pi \rho^2R^3$ becomes apparent. It turns out that the secondary main term comes from the part of the sum with $\vec{n}$ in the $z$-axis.

\begin{proposition}\label{error_s}
With the notation of Proposition~\ref{poisson}
\[
\mathcal{E}(R)= R'^3\sideset{}{'}\sum\eta\big(\delta\|\vec{n}\|\big)\widehat{\chi}(R'\vec{n})+O\big(R^{2+\epsilon}\delta\big)
\]
where the summation is over $\vec{n}\in \Z^3$ such that $n_1^2+n_2^2\ne 0$.
\end{proposition}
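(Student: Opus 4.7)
The plan is to start from Proposition~\ref{poisson} and split the lattice sum according to whether $(n_1,n_2)=(0,0)$. The non-axis contribution, with $n_1^2+n_2^2\neq 0$, is exactly the sum appearing in the statement of Proposition~\ref{error_s}. Hence the task reduces to showing that the axis contribution $R'^{3}\sum_{n_3\neq 0}\eta(\delta|n_3|)\widehat{\chi}(0,0,R'n_3)$ matches $\mathcal{M}(R)-2\pi^2\rho^2 R^3$ up to the admissible error $O(R^{2+\epsilon}\delta)$.

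To evaluate the Fourier transform on the $\xi_3$-axis I would take the limit $t\to 0^+$ in the formula of Lemma~\ref{fouriert}. The expansion $J_1(x)=x/2+O(x^3)$ cancels the $1/\sqrt{t}$ factor and leads to
\[
\widehat{\chi}(0,0,u)=\pi\rho\int_0^{2\pi}(1+\rho\sin\theta)^2\sin\theta\,e(\rho u\cos\theta)\,d\theta.
\]
The symmetry $\theta\mapsto 2\pi-\theta$ annihilates the $\sin\theta$ and $\sin^3\theta$ contributions. Writing $\sin^2\theta=(1-\cos 2\theta)/2$ and applying the classical identity $\int_0^{2\pi}e^{iv\cos\theta}\cos(n\theta)\,d\theta=2\pi i^n J_n(v)$ together with the recurrence $J_0(v)+J_2(v)=2J_1(v)/v$, one arrives at $\widehat{\chi}(0,0,u)=2\pi\rho J_1(2\pi\rho u)/u$. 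Pairing $n_3$ with $-n_3$ and using the oddness of $J_1$ yields the closed form
\[
R'^{3}\sum_{n_3\neq 0}\eta(\delta|n_3|)\widehat{\chi}(0,0,R'n_3)=4\pi\rho R'^{2}\sum_{n=1}^\infty\eta(\delta n)\frac{J_1(2\pi\rho R'n)}{n}.
\]

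The remaining step is to show that this agrees with the secondary main term $4\pi\rho R^{2}\sum_{n=1}^\infty J_1(2\pi\rho Rn)/n$ of $\mathcal{M}(R)$ up to $O(R^{2+\epsilon}\delta)$. I would decompose the discrepancy into three pieces: replacing the prefactor $R'^2$ by $R^2$ (handled by $R'^2-R^2\ll R\delta^{1-\epsilon}$ together with the absolutely convergent bound $\sum_n J_1(2\pi\rho R'n)/n\ll R^{-1/2}$ coming from $|J_1(x)|\ll x^{-1/2}$); changing the argument of $J_1$ from $R'n$ to $Rn$ (controlled by $|J_1'(x)|\ll x^{-1/2}$ in the range $n\lesssim |R'-R|^{-1}$ and by trivial decay beyond); and removing the cutoff $\eta(\delta n)$ (using $|\eta(\delta n)-1|\ll \delta n$, which is valid near the origin by smoothness of $\eta$ and trivially for $\delta n\ge 1$ since then $\eta(\delta n)=0$).

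The main technical subtlety will be the third estimate. A careless absolute bound $|\eta(\delta n)-1|\le 1$ only yields $O(R^{3/2})$ and therefore fails when $\delta$ is close to $R^{-1}$. Exploiting instead the linear vanishing $|\eta(\delta n)-1|\ll \delta n$ and combining with $|J_1(x)|\ll x^{-1/2}$ gives the refined bound $O(R^{3/2}\delta^{1/2})$, which lies safely inside $O(R^{2+\epsilon}\delta)$ for $\delta=R^{-c}$ with $c<1$. The other two pieces are dispatched by entirely analogous splittings at the threshold $n\sim|R'-R|^{-1}$.
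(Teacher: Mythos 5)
Your proposal is correct and follows essentially the same route as the paper: split off the $n_1=n_2=0$ terms, identify $\widehat{\chi}(0,0,u)=2\pi\rho u^{-1}J_1(2\pi\rho u)$ by letting $t\to0^+$ in Lemma~\ref{fouriert}, and then pass from $R'$ to $R$ and remove $\eta$ with an error $O\big(R^{3/2+\epsilon}\delta^{1/2}\big)\ll R^{2+\epsilon}\delta$ since $\delta=R^{-c}$ with $c<1$. The only differences (evaluating the Bessel integral via $J_0+J_2=2J_1/v$ rather than quoting it, and using a bound on $J_1'$ instead of substituting the asymptotic expansion of $J_1$) are cosmetic.
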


\begin{proof}
Our proof starts with the observation that for small arguments the Bessel function satisfies $J_1(z)\sim z/2$ so Lemma \ref{fouriert} shows by the continuity of $\widehat{\chi}$ that the sum in Proposition~\ref{poisson} restricted to the terms with $n_1=n_2=0$ can be written as
\[
2\pi\rho^2 R'^{3}
\sum_{n=1}^\infty
\eta\big(\delta n\big)
\int_0^{2\pi}
\sin^2\theta
\cos(2\pi \rho R' n\cos\theta)
\; d\theta,
\]
where the integral equals to $(\rho R' n)^{-1} J_1(2\pi \rho R'n)$.
Combining $|R'-R|\ll \delta^{1-\epsilon}$ with the asymptotic formula, now for large values,
\begin{equation}\label{eq:asymJ1}
J_1(2\pi z)=\frac{1}{\pi\sqrt{z}}\cos\big(2\pi z-\frac{3\pi}{4}\big)+O\big(z^{-3/2}\big),
\end{equation}
the error made by taking $R$ instead of $R'$ comes from the sums
\[
R^{3/2}\sum_{n>\delta^{-1+\epsilon}}n^{-3/2}
\qquad \text{and} \qquad
R^{3/2}\sum_{n<\delta^{-1}}\frac{\big|\eta(\delta n)e(R'n)-e(Rn)\big|}{n^{3/2}}.
\]
Using $\eta(x)=1+O(x)$ and $\big|e(R'n)-e(Rn)\big|\ll \delta n$, both sums are $\ll R^{3/2+\epsilon}\delta^{1/2}$, which completes the proof.
\end{proof}

\section{Preparation of the exponential sum}

In this section we want to express the sum appearing in Proposition~\ref{error_s} as an exponential sum. The basic argument is the application of the stationary phase principle to the integral representation of the Fourier transform given in Lemma~\ref{fouriert}.

\begin{proposition}\label{stationary}
For $t\ge 1$ and  $u\ge 1$
\[
\Psi(t,u)=
\frac{C\rho^{1/2}}{t^{1/4}\ell^{2}}
\Big(
T(t,u)
+O\big(t^{-1/2}\ell^{1/2}\big)
\Big)
+O\big(t^{-5/4}\big)
\]
where $C$ is an absolute constant, $\ell = (t+u^2)^{1/2}$ and
\[
T(t,u)=
(\ell+\rho\sqrt{t})^{1/2}
\cos\big(2\pi (\rho\ell+\sqrt{t})\big)
-
(\ell-\rho\sqrt{t})^{1/2}
\sin\big(2\pi (\rho\ell-\sqrt{t})\big).
\]
\end{proposition}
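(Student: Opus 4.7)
The plan is to substitute the asymptotic expansion for the Bessel function into the integral representation of $\Psi$ from Lemma~\ref{fouriert} and then invoke the stationary phase principle on the resulting oscillatory integrals. First I would use $J_1(2\pi z) = (\pi\sqrt{z})^{-1}\cos(2\pi z - 3\pi/4) + O(z^{-3/2})$ with $z = a(\theta)\sqrt{t}$ and $a(\theta) = 1+\rho\sin\theta \in [1-\rho,1+\rho]$. A trivial bound on the remainder contributes $O(t^{-5/4})$, accounting for the second error term in the statement. Expanding the cosine as $\frac{1}{2}[e(a\sqrt{t}-3/8) + e(-a\sqrt{t}+3/8)]$ and combining with the $e(\rho u\cos\theta)$ factor, I would split the main part into two integrals with amplitude $(1+\rho\sin\theta)^{1/2}\sin\theta$ and phases $\phi_\pm(\theta) = \pm\rho\sqrt{t}\sin\theta + \rho u\cos\theta$. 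Setting $\cos\alpha = \sqrt{t}/\ell$ and $\sin\alpha = u/\ell$ rewrites these as $\phi_+(\theta) = \rho\ell\sin(\theta+\alpha)$ and $\phi_-(\theta) = \rho\ell\sin(\alpha-\theta)$, i.e. single sinusoids of amplitude $\rho\ell$.

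Next I would apply the stationary phase formula to each integral. Each phase has exactly two non-degenerate critical points in $[0,2\pi]$ with $|\phi_\pm''| = \rho\ell$, and the condition $\rho<1$ ensures $\ell - \rho\sqrt{t} \ge (1-\rho)\sqrt{t}>0$, so the amplitudes at the four critical points, which equal $\pm\sqrt{t}(\ell\pm\rho\sqrt{t})^{1/2}/\ell^{3/2}$, are real and of clean closed form; the phase values are $\pm\rho\ell$. The standard stationary phase error per critical point is $O((\rho\ell)^{-3/2})$; multiplied by the prefactor $\rho t^{-3/4}$ coming from the Bessel asymptotic and using $\ell\ge\sqrt{t}$, this is comfortably absorbed into the stated error $(\rho^{1/2}t^{-1/4}\ell^{-2})\cdot O(t^{-1/2}\ell^{1/2})$.

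Finally I would combine the four leading contributions. Using $e(x)+e(-x)=2\cos(2\pi x)$, the pair carrying $(\ell+\rho\sqrt{t})^{1/2}$ produces $2\cos(2\pi(\rho\ell+\sqrt{t}))$, while the pair carrying $(\ell-\rho\sqrt{t})^{1/2}$ produces $-2\sin(2\pi(\rho\ell-\sqrt{t}))$ once the factors $e(\pm 1/4)=\pm i$, obtained by adding the Bessel constant $\mp 3/8$ to the Maslov phase $\mp 1/8$, convert the residual difference of exponentials into a sine. The prefactor works out to $-\rho^{1/2}/(\pi t^{1/4}\ell^2)$, giving the claimed formula with absolute constant $C=-1/\pi$. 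I expect the main difficulty to lie in the bookkeeping of the constant phase factors at the four critical points (each carries $\pm\sqrt{t}$ pulled out of $e(a\sqrt{t})$, $\mp 3/8$ from the Bessel asymptotic, and $\mp 1/8$ from the Maslov index) together with the signs of the four amplitude values; verifying that these sixteen contributions regroup exactly into the specific cosine-minus-sine combination defining $T(t,u)$ is the only delicate step, the stationary phase estimation itself being routine once non-degeneracy and amplitude regularity are checked.
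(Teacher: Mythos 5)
Your proposal is correct and follows essentially the same route as the paper: substitute the asymptotic expansion of $J_1$, reduce to oscillatory integrals with sinusoidal phase of size $\rho\ell$, and apply stationary phase at the two non-degenerate critical points (the paper merely packages your two conjugate integrals as the real part of a single integral over $[0,\pi]$ with phase $\rho\ell\cos\theta$ after the rotation by $\theta_{tu}$), and your final constant $C=-1/\pi$ and error bookkeeping agree with the paper's computation. Only your verbal sign summary is slightly off -- the cosine pair actually carries the constant $e(\mp 1/2)=-1$ (Bessel $\mp 3/8$ plus Maslov $\mp 1/8$), while the sine pair carries $e(\mp 1/4)$ from Bessel $\mp 3/8$ plus Maslov $\pm 1/8$ -- but this is exactly the bookkeeping you flag for careful verification and it does regroup into $T(t,u)$ as claimed.
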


\begin{proof}
Substituting the asymptotic formula \eqref{eq:asymJ1} we can rewrite $\Psi(t,u)$ as
\[
\frac{\rho}{\pi t^{3/4}}
\int_0^{2\pi}
A(\theta)\cos\big(2\pi \sqrt{t}(1+\rho\sin\theta)-\frac{3\pi}{4}\big)
e(\rho u\cos\theta)
\; d\theta + O(t^{-5/4})
\]
where $A(\theta)=(1+\rho\sin{\theta})^{1/2}\sin{\theta}$.
The above integral is the real part of
\[
\frac{e(\sqrt{t}-3/8)}{2}\int_0^{2\pi}
A(\theta)
\Big(
e\big(\rho(\sqrt{t}\sin{\theta}+u\cos{\theta})\big)
+e\big(\rho(\sqrt{t}\sin{\theta}-u\cos{\theta})\big)
\Big)
d\theta.
\]
Write $\ell = (t+u^2)^{1/2}$ and let $0<\theta_{tu}<\pi/2$ such that $\tan \theta_{tu}=\sqrt{t}/u$, so  $\sqrt{t}\sin{\theta}\pm u\cos{\theta}=\pm\ell\cos(\theta\mp \theta_{tu})$. Hence by a linear change of variables and noting that $A(\theta+\pi-\theta_{tu})=A(\theta_{tu}-\theta)$ we have
\[
\Psi(t,u)=
\frac{\rho}{\pi t^{3/4}}\Re\left(e\big(\sqrt{t}-3/8\big)\int_0^\pi A(\theta_{tu}-\theta)e(\rho\ell \cos{\theta})\; d\theta\right)
+ O(t^{-5/4}).
\]
We note that the main contribution to the resulting integral comes from regions of the points $\theta=0$ and $\theta=\pi$ at which the phase $\cos{\theta}$ is stationary. Applying the principle of stationary phase, the integral equals
\[
\rho^{-1/2}\ell^{-1/2} \Big(e(\rho\ell -1/8)A(\theta_{tu})+e(-\rho\ell+1/8)A(\theta_{tu}-\pi)+O(\ell^{-1})\Big).
\]
The proof is completed by noting that
\[
A(\theta_{tu})=(\ell+\rho\sqrt{t})^{1/2}t^{1/2}\ell^{-3/2}\quad \text{and}\quad A(\theta_{tu}-\pi)=-(\ell-\rho\sqrt{t})^{1/2}t^{1/2}\ell^{-3/2}.
\]
\end{proof}

\section{Estimation of the exponential sum}

After partial summation the relevant estimate for the main theorem is embodied in the following result.

\begin{proposition}\label{estimation}
For every   $M\le R^{4/3}$, $N\le R^{2/3}$ and any choice of the sign $\pm$
\[
\sup_{\substack{u<M\\ v<N}}
\sum_{\substack{M\le m<M+u\\ N\le n<N+v}}
r(m)
 e\big(R(\rho\sqrt{m+n^2}\pm \sqrt{m})\big)
=O\big(R^{1/3+\epsilon}M^{1/4}L^{3/4}\big)
\]
where $L=M+N^2$  and $r(m)$ is the number of representations of $m$ as a sum of two squares.
\end{proposition}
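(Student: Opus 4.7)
The plan is to open up $r(m)$ via the parametrization $m=a^2+b^2$, turning $S$ into a three-dimensional exponential sum over lattice points with phase
\begin{equation*}
\Phi(a,b,n)=\rho\sqrt{a^2+b^2+n^2}\pm\sqrt{a^2+b^2},
\end{equation*}
and then to apply stationary phase in the variable $n$ before invoking the exponential sum estimates of \cite{ChCr} and \cite{ChIw}. The sharp window $a^2+b^2\in[M,M+u)$, $n\in[N,N+v)$ is first replaced by a smooth cutoff, the loss being absorbed into the supremum over $u,v$ by partial summation.

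The first substantive step is Poisson summation (equivalently van der Corput's $B$-process) in $n$. As a function of $n$, the phase $\psi(n)=R\rho\sqrt{a^2+b^2+n^2}$ satisfies $\psi'(n)\asymp RN/\sqrt{L}$ and $\psi''(n)\asymp RM/L^{3/2}$, and the hypotheses $M\le R^{4/3}$, $N\le R^{2/3}$ guarantee $\psi''\gg 1$ uniformly in the relevant range. Stationary phase therefore converts the $n$-sum into a dual sum of length $\asymp RMv/L^{3/2}$ over a new variable $h\asymp RN/\sqrt{L}$, each term carrying the amplitude $|\psi''(n_h)|^{-1/2}\asymp L^{3/4}/\sqrt{RM}$, with stationary point $n_h=\sqrt{a^2+b^2}\,h/\sqrt{R^2\rho^2-h^2}$.

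What remains is a two-dimensional exponential sum in $(a,b,h)$ weighted by the annular measure $a^2+b^2\asymp M$. Since the sum over $(a,b)$ at fixed $h$ is again of the form $\sum_m r(m)e(Rg_h(m))$ encountered in circle and sphere problems, I would invoke the estimates of \cite{ChCr} and \cite{ChIw} to extract the remaining oscillation; a further van der Corput step on $h$ produces the characteristic $R^{1/3}$ saving. Combining the dual amplitude $L^{3/4}/\sqrt{RM}$, the number of effective dual terms, and the saving on the $(a,b,h)$ sum yields the target $R^{1/3+\epsilon}M^{1/4}L^{3/4}$.

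The main obstacle is the \emph{diagonal contribution}. When the minus sign is taken in $\Phi$ and $\rho\approx 1$ the two radicals nearly cancel for small $n$, making $\partial_m\Phi$ small and causing any $B$-process in $m$ to degenerate; analogously, $h$ near the endpoints of its dual range degrades the stationary phase in $n$. Handling these boundary regimes uniformly in $u,v$, together with the bookkeeping for both sign choices, is the delicate part of the argument. As the introduction stresses, this diagonal is precisely what forces the exponent $4/3$ in the main theorem and the corresponding $R^{1/3}$ here, so no further improvement should be expected from classical exponential sum methods alone.
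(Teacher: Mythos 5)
There is a genuine gap: what you have written is a plan, not a proof. The decisive step --- extracting the stated bound $R^{1/3+\epsilon}M^{1/4}L^{3/4}$ from the three-variable sum in $(a,b,h)$ that your stationary-phase step produces --- is only asserted (``invoke the estimates of \cite{ChCr} and \cite{ChIw}'', ``a further van der Corput step on $h$ produces the characteristic $R^{1/3}$ saving''), with no identification of which lemma is applied, no verification of its hypotheses, and no exponent bookkeeping; and the obstacle you yourself single out (degeneration near the diagonal for the minus sign and at the endpoints of the dual $h$-range) is explicitly left unresolved as ``the delicate part of the argument''. Moreover, your claim that $M\le R^{4/3}$, $N\le R^{2/3}$ guarantee $\psi''(n)\asymp RM/L^{-3/2}\cdot L^{3}\gg 1$ (i.e.\ $RM L^{-3/2}\gg 1$) is false: for $M\ll R$ and $N\asymp R^{2/3}$ one has $RML^{-3/2}\ll M R^{-1}\ll 1$, so the Poisson/stationary-phase step in $n$ degenerates precisely in ranges your sketch does not address.

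You also miss the structural observation that makes the proposition tractable and removes your ``main obstacle'' entirely: both $r(m)$ and the factor $e(\pm R\sqrt{m})$ depend on $m$ alone, so after completing the sum only in $n$ one may take absolute values of the inner $n$-sum for each $m$; this discards $r(m)\ll m^{\epsilon}$ and the whole $\pm R\sqrt{m}$ term at a cost $R^{\epsilon}$, reducing the statement to bounding
\[
S=\sum_{M\le m<2M}\Big|\sum_{N\le n<2N}e(\theta n)\,e\big(R\rho\sqrt{m+n^2}\big)\Big|,
\]
which the paper then estimates by Cauchy's inequality in $m$ (differencing in $n$), the first-derivative test when $RD\ll L^{3/2}$, and the $B$-process together with Proposition~3.6 of \cite{ChCr} when $RD\gg L^{3/2}$. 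In that framework there is no near-cancellation of radicals to control and no need to open $r(m)$ as $a^2+b^2$. Without either carrying out your program in full detail (including the degenerate ranges and both signs) or adopting such a reduction, the proposal does not establish the proposition.
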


The crucial step in the proof of Proposition~\ref{estimation} is a variation of the arguments of \cite{ChCr} that are a generalization of \cite{ChIw} and give a bound for
\begin{equation}\label{def_S}
S=
\sum_{M\le m<2M}
\Big|
\sum_{N\le n<2N}
e(\theta n)
 e\big(R\rho\sqrt{m+n^2}\big)
\Big|
\qquad\text{where $\theta\in\R$}.
\end{equation}

\begin{lemma}\label{fromChCr}
With notation as above, there exists some $D\le N^{2-\epsilon}$ such that for $RD\ll L^{3/2}$ we have
\[
S\ll MN^{1/2}+R^{-1+\epsilon}L^{3/2}M,
\]
and for $RD\gg L^{3/2}$,
\[
S\ll
MN^{1/2}
+R^{1/4+\epsilon}N^{7/6}L^{-1/24}M^{1/2}
+R^{\epsilon}NL^{1/4}M^{1/2}.
\]
\end{lemma}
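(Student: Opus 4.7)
The approach follows \cite{ChCr} (which adapts \cite{ChIw}), combining Cauchy--Schwarz with Poisson summation, stationary phase, and a Diophantine dissection. First, I would apply Cauchy--Schwarz in the $m$-variable:
$$S^2 \le M \sum_{M\le m<2M} \Bigl|\sum_{N\le n<2N} e(\theta n)\,e(R\rho\sqrt{m+n^2})\Bigr|^2.$$
Opening the square and swapping sums produces a diagonal ($n_1=n_2$) of size $M^2 N$, which accounts for the universal term $MN^{1/2}$ present in both cases. The off-diagonal terms, parametrised by $h = n_1 - n_2 \ne 0$ with $0 < |h| < N$, have inner $m$-sum of phase $\phi(m) = R\rho(\sqrt{m+n_1^2} - \sqrt{m+n_2^2})$, with $\phi'(m) \asymp RhN/L^{3/2}$.

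Next, I would apply Poisson summation in $m$ and evaluate each integral by stationary phase. The stationary point $m^*$ satisfies $\phi'(m^*) = k$ and lies in $[M, 2M]$ only for $|k| \ll RN/L^{1/2}$. Substituting produces a transformed sum in $(h, k, n_2)$ whose phase is explicitly computable in terms of $(Rh)^{2/3} k^{1/3}$ and algebraic factors in $n_2$. The two regimes in the statement then arise from a Diophantine dissection of $R\rho$: by Dirichlet's theorem one selects $D \le N^{2-\epsilon}$ and $a$ coprime to $D$ with $|R\rho - a/D| \le 1/(D N^{2-\epsilon})$, and this is the $D$ appearing in the statement. When $RD \ll L^{3/2}$ only the $k=0$ contribution survives the dual cutoff; a direct application of the first-derivative test on $\phi$ for the surviving $m$-sum then yields $R^{-1+\epsilon}L^{3/2}M$. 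When $RD \gg L^{3/2}$ several stationary points contribute, and a van der Corput $B$-process applied to the transformed sum --- using $a/D$ to control the spacing of consecutive stationary phases --- produces the remaining two terms; the Weyl factor $R^{1/4}$ and the shape $N^{7/6} L^{-1/24} M^{1/2}$ are consistent with one $B$-process iteration on the $k$-sum followed by taking the square root from the Cauchy--Schwarz inversion, while $R^{\epsilon} N L^{1/4} M^{1/2}$ is the spacing estimate for the stationary points themselves.

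The main obstacle lies in the $RD \gg L^{3/2}$ regime: one must calibrate the Diophantine dissection so that the sub-bounds assemble into precisely the stated inequality, verify that the fractional exponents survive the square root correctly, and handle the boundary cases where $m^*$ sits near the endpoints of $[M, 2M]$ (where the clean stationary-phase expansion fails and must be supplemented with an endpoint bound). This is the step where adapting the arguments of \cite{ChCr} --- which treat an analogous sum with somewhat different dimensional parameters --- requires the most care, since the $\epsilon$-losses from the approximation $|R\rho - a/D| \le 1/(DN^{2-\epsilon})$ and from divisor-type bounds on the multiplicities must be absorbed cleanly into the $R^\epsilon$ factor.
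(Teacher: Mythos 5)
Your opening moves (Cauchy--Schwarz in $m$, the diagonal $M^2N$ giving $MN^{1/2}$, and the observation that the off-diagonal phase has $m$-derivative of size $R|n_1-n_2|N L^{-3/2}$) agree with the paper, but your identification of the parameter $D$ is wrong, and this derails the rest. In the paper's argument (following Lemma~3.1 of \cite{ChIw} and \cite{ChCr}), $D$ is \emph{not} a Dirichlet denominator of $R\rho$: after Cauchy one collapses the pair $(n_1,n_2)$ into the single variable $y=n_1^2-n_2^2$ (and $m+n_2^2$ into $x\asymp L$), using Fourier inversion/completion with a factor $e(\omega x)$ so that the $x$-range is independent of $y$, and the mean value theorem together with divisor-type multiplicity bounds to control how often each $y$ occurs; $D$ is the dyadic size of $y$, whence $D\le N^{2-\epsilon}$ because $|n_1-n_2|<N^{1-\epsilon}$ and $n_1+n_2\asymp N$. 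The dichotomy $RD\lessgtr L^{3/2}$ is then exactly whether the derivative $\partial_x\big(R\rho(\sqrt{x}-\sqrt{x+y})\big)\asymp RDL^{-3/2}$ is small (first derivative test, giving the $R^{-1+\epsilon}L^{3/2}M$ term) or large (B-process applicable). A rational approximation $|R\rho-a/D|\le 1/(DN^{2-\epsilon})$ plays no role, and with your interpretation the stated case split and the bound $D\le N^{2-\epsilon}$ have no reason to hold.

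The second, more serious gap is in the regime $RD\gg L^{3/2}$: you never actually derive the bound. The paper applies the B-process to the two-variable sum in $(x,y)$, obtaining $S^2\ll M^2N+R^{-1/2+\epsilon}D^{-1/2}L^{5/4}M|T_{DL}|$ with $T_{DL}$ a new two-dimensional exponential sum over $y\asymp D$, $x\asymp RDL^{-3/2}$, and then invokes the nontrivial estimate of Proposition~3.6 of \cite{ChCr} (with $p=q=1/2$), $T_{DL}\ll RD^{5/3}L^{-4/3}+R^{1/2+\epsilon}D^{3/2}L^{-3/4}$; inserting $D\le N^{2-\epsilon}$ and taking square roots produces exactly $R^{1/4+\epsilon}N^{7/6}L^{-1/24}M^{1/2}+R^{\epsilon}NL^{1/4}M^{1/2}$. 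Your sketch replaces this input by the assertion that the exponents are ``consistent with'' one more B-process and a spacing argument for stationary points of a three-variable sum in $(h,k,n_2)$; that is not a proof, and without the variable-collapsing step your transformed sum is not the object the cited two-dimensional estimates apply to. To repair the argument you would need to (i) redefine $D$ as the size of $y=n_1^2-n_2^2$ with the multiplicity/completion step made explicit, and (ii) either quote or reprove the two-dimensional bound on $T_{DL}$ that is the real engine behind the exponents $N^{7/6}L^{-1/24}$ and $NL^{1/4}$.
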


\begin{proof}
Following the first steps in Lemma~3.1 of \cite{ChIw}, by Cauchy's inequality
\begin{equation}\label{cau}
S^2
\ll
MN^\epsilon
\sum_{n_1,n_2}
\Big|
\sum_{m\asymp M}
 e\Big(R\rho\big(\sqrt{m+n_1^2}-\sqrt{m+n_2^2}\big)\Big)
\Big|,
\end{equation}
where $|n_1-n_2|<N^{1-\epsilon}$. Thus, for a suitable $D\le N^{2-\epsilon}$ and $\omega\in\R$, we get by Fourier inversion and the mean value theorem \cite[\S12.2]{IwKo} \cite[Lemma~7.3]{GrKo}
\[
S^2
\ll
M^2N+
MR^\epsilon
\sum_{y\asymp D}
\Big|
\sum_{x\asymp L}
 e(\omega x)e\big(R\rho(\sqrt{x}-\sqrt{x+y})\big)
\Big|.
\]
This $\omega$ is introduced to complete the summation and, in this way, the range of $x$ does not depend on $y$.

On the one hand, if $RD\ll L^{3/2}$ then $f(x,y)=R\rho(\sqrt{x}-\sqrt{x+y})$ is monotonic in $x$ and its partial derivative satisfies $\partial_1f\asymp RDL^{-3/2}$. Therefore by the first derivative test (see Theorem~2.1 of \cite{GrKo}) the inner sum is $\ll R^{-1}D^{-1}L^{3/2}$.

If $RD\gg L^{3/2}$ we apply the $B$-process of the van der Corput method (Lemma~3.6 of \cite{GrKo}) to transform the sum. This is done in Lemma~3.1 of \cite{ChIw}. We deduce in this case (cf. Lemma~3.3 of \cite{ChCr})
\[
S^2
\ll
M^2N+
R^{-1/2+\epsilon}D^{-1/2}L^{5/4}M\big|T_{DL}\big|,
\]
where $T_{DL}$ is the exponential sum
\[
T_{DL}=\sum_{y\asymp D}
\Big|
\sum_{x\asymp RDL^{-3/2}}
e\big(g(x,y)\big)
\Big|,
\]
with $g(x,y)=f(\alpha(x,y),y)-x\alpha(x,y)$ and $\alpha=\alpha(x,y)$ implicitly defined as $\partial_1 f(\alpha(x,y),y)=x$.

Finally, Proposition~3.6 of \cite{ChCr} with $p=q=1/2$ gives
\[
T_{DL}
\ll RD^{5/3}L^{-4/3}+R^{1/2+\epsilon}D^{3/2}L^{-3/4},
\]
and the proof is complete.
\end{proof}

\begin{proof}[Proposition \ref{estimation}]
By the completing sum technique,
\begin{equation}\label{compl}
\sup_{\substack{u<M\\ v<N}}
\sum_{\substack{M\le m<M+u\\ N\le n<N+v}}
r(m)
 e\big(R(\rho\sqrt{m+n^2}\pm \sqrt{m})\big)
\ll R^{\epsilon}S
\end{equation}
The result follows from Lemma \ref{fromChCr} by dividing into two cases, $N^2<M$ and $N^2\geq M$, which give rise to $L\asymp M$ and $L\asymp N^2$, respectively.
\end{proof}

\section{End of the proof}

To prove the main theorem we have just to combine the results of the previous sections.

\begin{proof}[Proof of Theorem \ref{mainth}]
By Proposition~\ref{error_s} and Lemma~\ref{fouriert}, taking $\delta=R^{-2/3}$ if we prove that
\[
R^2\sum_{n=-\infty}^\infty\sum_{m=1}^\infty r(m)\eta\big(R^{-2/3}\sqrt{m+n^2}\big)\Psi(R^2m,Rn)
=
O\big(R^{1/3+\epsilon}\big)
\]
the assertion follows.
The contribution of $n=0$ is absorbed by the error term, and the symmetry in $n$ allows to consider the first summation restricted to $n\geq 1$.
Now Proposition~\ref{stationary} shows that the above is equivalent to
\begin{equation}\label{eq:theo}
\sum_{\substack{m<R^{4/3}\\ n<R^{2/3}}} H(m,n) r(m)e\big(R(\rho\sqrt{m+n^2}\pm \sqrt{m})\big)
=
O\big(R^{1/3+\epsilon}\big)
\end{equation}
where $H(m,n)= h(m,n)\eta\big(R^{-2/3}\sqrt{m+n^2}\big)$ with
\[
h(m,n)=\frac{\big(\sqrt{m+n^2}\pm\rho\sqrt{m}\big)^{1/2}}{m^{1/4}(m+n^2)}
\]
because the error terms in this proposition are absorbed again by the error term.

We can write
\begin{equation}\label{eq:hF}
h(m,n)=\frac{F(n^2/m)}{m^{1/4}(m+n^2)^{3/4}}
\qquad
\text{with }\quad
F(x)=
\Big(1\pm \frac{\rho}{\sqrt{x+1}}\Big)^{1/2}.
\end{equation}
A calculation proves
\[
F'=\mp\frac{\rho}{4}
\big((x+1)^{3/2}F\big)^{-1}
\quad\text{and}\quad
F''=
\pm\frac{3\rho}{8}
\big((x+1)^{5/2}F\big)^{-1}
-\frac{\rho^2}{16}
\big((x+1)F\big)^{-3}.
\]
Hence for $x\in\R^+$ and any choice of the $\pm$ sign
\[
F'(x)
\ll
(x+1)^{-3/2}
\qquad\text{and}\qquad
F''(x)
\ll
(x+1)^{-5/2}
\]

Dividing the summation in \eqref{eq:theo} into dyadic intervals, we can restrict ourselves to $M\leq m<2M$, $N\leq n<2N$ with $M,N^2\ll R^{4/3}$.
Using the previous bounds, in these ranges we have
\[
F
\big(
\frac{n^2}{m}
\big)
\ll 1,
\quad
F'
\big(
\frac{n^2}{m}
\big)
\ll M^{3/2}L^{-3/2}
\quad\text{and}\quad
F''
\big(
\frac{n^2}{m}
\big)
\ll M^{5/4}L^{-5/4}
\]
and $h$ satisfies
\begin{equation*}
\begin{aligned}
h(m,n)\ll M^{-1/4}L^{-3/4},&\qquad
\partial_1h(m,n)\ll M^{-5/4}L^{-3/4}\\
\partial_2h(m,n)\ll M^{-1/4}N^{-1}L^{-3/4},&\qquad
\partial_1\partial_2h(m,n)\ll M^{-5/4}N^{-1}L^{-3/4}\\
\end{aligned}
\end{equation*}
with $L=M+N^2$. The same bounds hold for $H$ because the factor  involving $\eta$ can be seen as $\phi(m/R^{4/3},n/R^{2/3})$ with $\phi$ a smooth function.

Abel's summation formula in both variables reads (see Lemma $\alpha$ of \cite{titchmarsh} and Lemma~4 of \cite{KrNo} for other forms of partial summation)
\begin{equation*}
\begin{aligned}
\sum_{a\le m\le b}
\sum_{c\le n\le d}
a_{mn} & H(m,n)
=\;
H(b,d)
A(b,d)
-\int_a^b
A(t,d)
\partial_1 H(t,d)\; dt
\\
&
-\int_c^d
A(b,u)
\partial_2 H(b,u)\; du
+\int_a^b\int_c^d
A(t,u)
\partial_{12} H(t,u)\; dudt
\end{aligned}
\end{equation*}
where
\[
 A(t,u)=
\sum_{a\le m\le t}
\sum_{c\le n\le u}
a_{mn}
\qquad\text{where $a<b$, $c<d$, are positive integers.}
\]

In our case, when applied to the left hand side of \eqref{eq:theo}, after the dyadic subdivision, we get that it is bounded by
\[
R^{\epsilon}M^{-1/4}L^{-3/4}
\sup_{\substack{u<M\\ v<N}}
\sum_{\substack{M\le m<M+u\\ N\le n<N+v}}
r(m)
 e\big(R(\rho\sqrt{m+n^2}\pm \sqrt{m})\big)
\]
and Proposition \ref{estimation} yields the assertion of the theorem.
\end{proof}

\section{Some generalizations}

Given  $A\in\text{GL}_3(\Q)$ we consider the lattice point problem for $A\mathbb{T}$ i.e.,  we are interested in approximating
\begin{equation}\label{N_A}
\mathcal{N}_A(R)=
\#\big\{\vec{n}\in\Z^3\;:\; R^{-1}\vec{n}\in A\mathbb{T}\big\}.
\end{equation}
The main term is the following variation of \eqref{main}
\begin{equation}\label{M_A}
\mathcal{M}_A(R)=
2\pi^2\rho^2\rho'|\det(A)|R^3
+4\pi\rho\rho'r_A^{-1}|\det(A)| R^{2}
\sum_{n=1}^\infty
\frac{J_1(2\pi Rr_A\rho n)}{n}
\end{equation}
where $r_A=\min\big\{r>0\;:\;(r,0,0)\in A^t\Z^3\big\}$.

With this notation, Theorem~\ref{mainth} generalizes to

\begin{theorem}\label{mainth2}
With the definitions \eqref{N_A} and \eqref{M_A}, for a fixed $A\in\text{\rm GL}_3(\Q)$ the lattice point discrepancy
$\mathcal{E}_A(R)=
\mathcal{N}_A(R)-\mathcal{M}_A(R)$ satisfies
\[
\mathcal{E}_A(R)=
O\big(R^{4/3+\epsilon}\big)
\qquad\text{for every}\quad\epsilon>0.
\]
\end{theorem}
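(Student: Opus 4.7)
The plan is to replicate the scheme of Sections~2--5 on the lattice $A^{-1}\Z^3$ in place of $\Z^3$. Since $\vec{n}\in R\cdot A\mathbb{T}$ iff $R^{-1}A^{-1}\vec{n}\in\mathbb{T}$, one has $\mathcal{N}_A(R)=\sum_{\vec{n}\in\Z^3}\chi(R^{-1}A^{-1}\vec{n})$. The function $f(\vec{x})=\chi(R^{-1}A^{-1}\vec{x})$ has Fourier transform $R^3|\det A|\widehat{\chi}(RA^t\vec{\xi})$, so the smoothing argument of Proposition~\ref{poisson}, whose proof is blind to the shape of the body, would yield
\[
\mathcal{N}_A(R)=2\pi^2\rho^2|\det A|R^3+|\det A|R'^3\sum_{\vec{0}\ne\vec{n}\in\Z^3}\eta(\delta\|\vec{n}\|)\widehat{\chi}(R'A^t\vec{n})+O\bigl(R^{2+\epsilon}\delta\bigr),
\]
with the same choice of $R'$ and $\delta$ as there.

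Next I would extract the secondary main term exactly as in Proposition~\ref{error_s}. The vectors of $A^t\Z^3$ lying on the axis of symmetry of $\mathbb{T}$ form a rank-one sublattice, generated by a vector of length~$r_A$; on this sublattice $\widehat{\chi}(R'A^t\vec{n})$ only involves $J_1$ at small arguments, and the computation of Proposition~\ref{error_s} reproduces the Bessel series in~\eqref{M_A} up to an error $O(R^{3/2+\epsilon}\delta^{1/2})$. What is left is $\mathcal{E}_A(R)$ as a sum over the complementary vectors $\vec{n}\in\Z^3$; applying Proposition~\ref{stationary}, which depends solely on the geometry of $\mathbb{T}$, converts it into an exponential sum with phase $2\pi R'(\rho\ell_{\vec{n}}\pm\sqrt{Q(\vec{n})})$, where $Q(\vec{n})=(A^t\vec{n})_1^2+(A^t\vec{n})_2^2$ and $\ell_{\vec{n}}=\|A^t\vec{n}\|$.

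To place this sum in the framework of Proposition~\ref{estimation}, complete the primitive generator $\vec{n}_0$ of the axis sublattice to a $\Z$-basis $\{\vec{n}_0,\vec{f}_1,\vec{f}_2\}$ of $\Z^3$ and parametrise $\vec{n}=k\vec{n}_0+\alpha_1\vec{f}_1+\alpha_2\vec{f}_2$. In these coordinates $Q(\vec{n})$ becomes a positive definite rational binary form $Q'(\alpha_1,\alpha_2)$, independent of $k$, while $(A^t\vec{n})_3$ equals $kr_A$ plus a rational linear form in $\alpha_1,\alpha_2$. Choose $q\in\Z^+$ large enough to clear the denominators of $Q'$, $r_A$ and the linear drift coefficients, and split the $(\alpha_1,\alpha_2,k)$ summation into the finitely many residue classes modulo~$q$. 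Inside each class, rescaling by $q$ and translating exhibits the inner sum as an instance of Proposition~\ref{estimation} in which $r(m)$ is replaced by the representation count $r_{Q'}(m)$ of $m$ by the integer binary form $qQ'$.

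The main obstacle is therefore checking that the argument of Section~4 accommodates this replacement without losing the exponent. The needed inputs are standard: $r_{Q'}(m)\ll m^\epsilon$ on average, which is all that \eqref{cau} and the completion step use; the first and second derivative bounds on the phase depend only on $\rho$, $A$ and the ratio $n^2/m$, so Cauchy--Schwarz, the mean value theorem, the first derivative test, the $B$-process and Proposition~3.6 of~\cite{ChCr} adapt verbatim with constants depending on $A$. Assembling the $O(1)$ contributions over the residue classes and applying the partial summation of Section~5 yields $\mathcal{E}_A(R)=O(R^{4/3+\epsilon})$.
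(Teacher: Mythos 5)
Your outline is essentially the paper's own argument: pass to the lattice $A^t\Z^3$ via Poisson summation, peel off the rank-one axis sublattice $\{(0,0,r_An)\}$ to produce the Bessel series in \eqref{M_A}, and reduce the remaining sum to the machinery of Proposition~\ref{estimation} after clearing denominators with a modulus $q$, using only the divisor-type bound $\ll m^\epsilon$ for the representation numbers and positivity to discard the congruence conditions after Cauchy--Schwarz. Where you differ is the bookkeeping: the paper first reduces to integral $A$ and rewrites $A^t\Z^3=\{\vec m\in\Z^3:\ B\vec m\in q\Z^3\}$ with $qI_3=BA^t$, so the sum keeps exactly the shape of Theorem~\ref{mainth} with $r(m)$ replaced by the class-restricted count $r^*(m)\le r(m)$ and $n\equiv c_3\ (q)$; you instead choose a basis adapted to the axis, which produces a general rational binary form $Q'$ and a linear drift of the third coordinate, handled by enlarging $q$. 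Both routes work (the paper itself uses the analogous $r_{Q^*}(m)$ device in its treatment of $\mathbb{T}_Q$), but the paper's version avoids the drift and gets the needed arithmetic bound for free from $r^*\le r$.

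One step as you wrote it would hitch. You smooth with the plain radial mollifier and carry the weight $\eta(\delta\|\vec n\|)$ into the final sum. In your adapted coordinates this weight is not a function of $\bigl(Q'(\alpha_1,\alpha_2),(A^t\vec n)_3\bigr)$, i.e.\ it is not constant on the level sets $Q'(\alpha)=m$, so you cannot collapse the $(\alpha_1,\alpha_2)$-sum into $\sum_m r_{Q'}(m)(\cdots)$, and the two-variable partial summation of Section~5, whose weight $H(m,n)$ must depend on $(m,n)$ alone, does not literally apply. This is precisely why the paper runs Proposition~\ref{poisson} with $\eta(\|A^t\vec x\|)$ in place of $\eta(\|\vec x\|)$: the resulting weight $\eta(\delta\|A^t\vec n\|)$ is a function of $Q(\vec n)$ and $(A^t\vec n)_3$ only, so the collapse and the Abel summation go through unchanged. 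The repair is a one-line change of mollifier (or, alternatively, partial summation performed in the lattice coordinates before collapsing), but as stated your final assembly step does not quite go through.
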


\begin{proof}
As before, the invariance
$(R,\rho,\rho')\mapsto (\lambda^{-1}R,\lambda\rho,\lambda\rho')$
allows us to assume $\rho'=1$ choosing $\lambda=1/\rho'$. On the other hand, by the invariance
$(R,A)\mapsto (\mu^{-1}R,\mu A)$, taking $\mu$ as the common denominator of the entries of $A$ we can assume that $A$ is an integral matrix. The reflection through the $xy$-plane $S:(x,y,z)\mapsto (x,y,-z)$ preserves $\mathbb{T}$, in particular $\mathcal{N}_{AS}(R)=\mathcal{N}_{A}(R)$ and we can also assume that $\det(A)>0$, changing $A$ by $AS$ if necessary.

Note that $\chi_A(\vec{x})=\chi(A^{-1}\vec{x})$ is the characteristic function of $A\mathbb{T}$. Applying Proposition~\ref{poisson} with $\chi_A$ instead of $\chi$ and using in the proof $\eta\big(\|A^t\vec{x}\|\big)$ instead of $\eta\big(\|\vec{x}\|\big)$, we have
\begin{equation}\label{poisson_A}
\mathcal N_A(R)= 2\pi^2 \rho^2|A|R^3+
|A|R'^3
\sum_{\vec{n}\in\Z^3-\{\vec{0}\}}\eta\big(\delta\|A^t\vec{n}\|\big)\widehat{\chi}(R'A^t\vec{n})+O\big(R^{2+\epsilon}\delta\big)
\end{equation}
for $|R'-R|\ll \delta^{1-\epsilon}$,
where we write $|A|=\det(A)$ and we have employed $\widehat{\chi}_A(\vec{\xi})=|A|\widehat{\chi}(A^t\vec{\xi})$ by the properties of the Fourier transform.

Taking the limit $\xi_1^2+\xi_2^2\to 0$ in Lemma~\ref{fouriert}, using $J_1(z)\sim z/2$ as in Proposition~\ref{error_s}, we have
\[
 \widehat{\chi}(0,0,u)=
 2\pi\rho^2
 \int_0^{2\pi}\sin^2\theta\cos(2\pi\rho u\cos\theta)\; d\theta
 =2\pi\rho u^{-1}J_1(2\pi \rho u).
\]
As $\big\{A^t\vec{n}\;:\; \vec{n}\in\Z^3\big\}$ is a lattice, the elements of this set lying on the $z$-axis are  $\big\{(0,0,r_An)\big\}_{n\in\Z}$. Its contribution to the
sum in \eqref{poisson_A} is
\[
\sum_{n\in\Z-\{0\}}
\eta\big(\delta r_A|n|\big)\widehat{\chi}(0,0,R'r_An)
=
4\pi\rho
\sum_{n=1}^\infty
\eta\big(\delta r_A|n|\big)
\frac{J_1(2\pi R'r_A\rho n)}{R'r_An}.
\]
As shown in Proposition~\ref{error_s}, $R'$ can be substituted by $R$ and $\eta$ by $1$ with a negligible error term, and once we multiply by the
$|A|R'^3$ factor in \eqref{poisson_A} we get the secondary main term in \eqref{M_A} with $\rho'=1$. Then the analog of Proposition~\ref{error_s} becomes
\begin{equation}\label{E_A}
\mathcal{E}_A(R)= |A|R'^3
\sideset{}{'}\sum
\eta\big(\delta\|A^t\vec{n}\|\big)
\widehat{\chi}(R'A^t\vec{n})+O\big(R^{2+\epsilon}\delta\big)
\end{equation}
where the summation is restricted to $\vec{n}\in\Z^3$ such that $A^t\vec{n}$ does not lie on the $z$-axis.

Let $q\in\Z^+$ and $B$ an integral matrix such that $qI_3=BA^t$. As $A$ is itself integral one can take $q=|A|$ and $B$ the cofactor matrix of $A$. With this definition
\[
 \big\{A^t\vec{n}\;:\;\vec{n}\in\Z^3\big\}
=
 \big\{\vec{m}\in\Z^3\;:\;\vec{m}\in A^t\Z^3\big\}
=
 \big\{\vec{m}\in\Z^3\;:\;B\vec{m}\in q\Z^3\big\}.
\]
Then in \eqref{E_A} we can replace $A^t\vec{n}$ by $\vec{m}\in\Z^3$ not lying on the $z$-axis and restricted to some congruence classes modulo~$q$. Let $\vec{c}$ be a representative of the class giving the biggest contribution in \eqref{E_A}. Then, renaming $R'$ as $R$, we have to prove, for some choice of $\delta$,
\[
 R^3
 \Big|
 \sum_{\substack{\vec{m}\equiv\vec{c}\ (q) \\ m_1^2+m_2^2\ne 0}}
\eta\big(\delta\|\vec{m}\|\big)
\widehat{\chi}(R\vec{m})
 \Big|
+R^{2+\epsilon}\delta
=
O\big(R^{4/3+\epsilon}\big).
\]
The steps of the proof of Theorem~\ref{mainth} given in the previous section can be completed to obtain this with $\delta=R^{-2/3}$ if we assume Proposition~\ref{estimation} with $n$ restricted to $n\equiv c_3\ (q)$ and $r(m)$ replaced by
\[
 r^*(m)=
 \#\big\{
 (m_1,m_2)\in\Z^2\;:\; m=m_1^2+m_2^2,\ m_1\equiv c_1\ (q),\ m_2\equiv c_2\ (q)
 \big\}.
\]
To justify this assumption, it is enough to note that  the bound $r_*(m)\le r(m)\ll m^\epsilon$ is enough to get the analog of \eqref{compl} but now with $S$ as in \eqref{def_S} but imposing $n\equiv c_3\ (q)$. On the other hand, in \eqref{cau} the corresponding conditions $n_1\equiv n_2\equiv c_3\ (q)$ can be dropped trivially by positivity. Once the congruence conditions are ruled out, the rest of proof applies.
\end{proof}

If $A\in\text{GL}_3(\R)$ and $A$ has not rational entries, the method does not work in general because we cannot mix variables in an arithmetic way and this is a key step in the estimation of the exponential sums appearing in the problem. For instance, note that we have used smoothed forms of the formal identity
\[
 \sum_{n_1}
 \sum_{n_2}
 \sum_{n_3}
 \Psi(n_1^2+n_2^2,n_3)
 =
 \sum_m\sum_n
 r(m)\Psi(m,n)
\]
that does not have an analog if $n_1\mapsto \alpha^{1/2}n_1$ with $\alpha\not\in\Q$ because the fractional part of $\alpha n_1^2+n_2^2$ is dense in $[0,1]$.

An exception is the case of the lattice point problem associated to
\[
 \mathbb{T}_Q=
\Big\{(x,y,z)\in\R^3\;:\;
\big(\rho'-\sqrt{Q(x,y)}\big)^2+z^2\le\rho^2\Big\}
\]
where $Q$ is a positive definite quadratic form with rational coefficients. Geometrically it is a torus with horizontal sections given by elliptic annuli.
If $\chi_Q$ is the characteristic function of $\mathbb{T}_Q$ then using the properties of the Fourier transform one deduces, with the notation of Lemma~\ref{fouriert},
\begin{equation}\label{q_chi}
\widehat{\chi}_Q(\vec{\xi})=
d^{-1/2}\Psi(Q^*(\xi;\xi_2),\xi_3)
\end{equation}
where $d$ is the determinant of $Q$ and $Q^*$ is the quadratic form having as matrix the inverse of the matrix of $Q$. Using a rational homothecy affecting to the two first variables (this transformation is covered by Theorem~\ref{mainth2}) we can assume that $Q^*$ has integral coefficients and then we have
\[
 \sum_{n_1}
 \sum_{n_2}
 \sum_{n_3}
 \Psi\big(Q^*(n_1,n_2),n_3\big)
 =
 \sum_m\sum_n
 r_{Q^*}(m)\Psi(m,n)
\]
where $r_{Q^*}(m)$ is the number of representations of $m$ by the binary quadratic form $Q^*$. The method in the proof of Theorem~\ref{mainth} applies just changing $r(m)$ by $r_{Q^*}(m)$ and it does not affect to the estimation of the exponential sum because $r_Q(m)\ll m^\epsilon$ and can be extracted in \eqref{compl}.

On the other hand, by \eqref{q_chi} the main term \eqref{main} has to be multiplied by~$d^{-1/2}$ and we obtain
\[
 \#\big\{\vec{n}\in\Z^3\;:\; R^{-1}\vec{n}\in\mathbb{T}_Q\big\}
 =
 d^{-1/2}\mathcal{M}(R)+ O\big(R^{4/3+\epsilon}\big)
 \qquad\text{for every}\quad\epsilon>0.
\]
This result also holds if $\mathbb{T}_Q$ is replaced by $A\mathbb{T}_Q$ and $\mathcal{M}(R)$ by $\mathcal{M}_A(R)$ when $A\in\text{GL}_3(\Q)$.


\end{document}